\newtheorem{Theorem}{Theorem}[section]
\newtheorem{Cor}{Corollary}
 \newtheorem{Lemma}{Lemma}
 \newtheorem{ex}{Example}
 \newtheorem{Proposition}{Proposition}
 \theoremstyle{definition}
 \theoremstyle{remark}
 \newtheorem{Remark}[Lemma]{Remark}
 \numberwithin{equation}{subsection}
\newcommand{\ra}{\rightarrow}
\begin{document}
\title[INDUCED DYNAMICS ON THE HYPERSPACES]{INDUCED DYNAMICS ON THE HYPERSPACES}%
\author{Puneet Sharma}
\address{Department of Mathematics, I.I.T. Jodhpur, Old Residency Road, Ratnada, Jodhpur-342011, INDIA}%
\email{puneet.iitd@yahoo.com}%

%\thanks{The first author thanks CSIR for financial support.}%

\subjclass{37B20, 37B99, 54C60, 54H20}

\keywords{Hyperspace, combined dynamics, Relations, Induced map,
transitivity, super-transitiity}

\begin{abstract}
In this paper, we study the dynamics induced by finite commutative
relation. We prove that the dynamics generated by such a non-trivial
collection cannot be transitive/super-transitive and hence cannot
exhibit higher degrees of mixing. As a consequence we establish that
the dynamics induced by such a collection on the hyperspace endowed
with any admissible hit and miss topology cannot be transitive and
hence cannot exhibit any form of mixing. We also prove that if the
system is generated by such a commutative collection, under suitable
conditions the induced system cannot have dense set of periodic
points. In the end we give example to show that the induced dynamics
in this case may or may not be sensitive.
\end{abstract}
\maketitle

\section{INTRODUCTION}

\subsection{Motivation:}
Dynamical were introduced to investigate different physical and
natural phenomenon occurring in nature. Using the theory of
dynamical systems, mathematical models for various physical/natural
phenomenon were developed and long term behavior of the natural
phenomenon/systems were investigated. While logistic maps were used
to develop the population model for any species, Lorentz system of
differential equations was used for developing mathematical models
for weather predictions. Since then, dynamical systems (both
discrete and continuous) have found applications in various branches
of science and engineering and various phenomenon occurring in a
variety of disciplines have been investigated. In some of the recent
studies, it has been observed that many systems observed in
different branches of science and engineering can be investigated
using set-valued dynamics ( c.f. \cite{rr,dd,srcg,sg}). While
\cite{srcg} used set-valued dynamics to study handwheel force
feedback for lanekeeping assistance, \cite{sg} used set-valued
dynamics to investigate the collective dynamics of an electron and a
nuclei. These examples suggest that the dynamics of different
systems evolving in various disciplines of science and engineering
can be modeled using set-valued dynamics. Thus, it is important to
study the set-valued dynamics induced by a continuous self map which
inturn can help characterizing the dynamics of a general dynamical
system. Many researchers have addressed the problem and many of the
questions in this direction have been answered
\cite{ba,do,ap,pa1,pa2,hrf}. In the process, the dynamical behavior
of a system and its corresponding set-valued counterpart has been
investigated and several interesting results have been obtained. In
\cite{ba,hrf}, authors proved that while weakly mixing and
topological mixing on the two spaces are equivalent, transitivity on
the base space need not imply transitivity on the hyperspace.
Interesting results relating the topological entropy of the two
spaces have been obtained \cite{do}. In \cite{pa1}, Sharma and Nagar
investigated some of the natural questions arising from this
setting. They investigated the influence of each of the individual
units and the role of the underlying hyperspace topology in
determining the dynamics induced by a continuous self map on a
general topological space. In the process they investigated
properties like dense periodicity, transitivity, weakly mixing and
topological mixing. They also investigated notions like sensitive
dependence on initial conditions, topological entropy, Li-Yorke
chaos, existence of Li-Yorke pairs, existence of horseshoe and the
corresponding results were established. Investigating the inverse of
the problem stated, they also investigated the behavior of an
individual component of the system, given the dynamical behavior of
the induced system on the
hyperspace \cite{pa1,pa2}.\\

Generalizing the stated problem, Nagar and Sharma\cite{ap}
investigated the dynamics induced by a finite collection of
continuous self maps. They derived necessary and sufficient
conditions for the induced collective dynamics to exhibit various
dynamical notions. They introduced the notion of super-transitivity,
super-weakly mixing and super-topological mixing for investigating
the dynamics induced on the hyperspace. They proved that
super-transitivity of a relation is necessary to induce transitivity
on the hyperspace. They proved that for any finite relation $F$ on
the space $X$, the induced map on the hyperspace $\mathcal{K}(X)$ is
weakly mixing (topologically mixing) if and only if the relation $F$
is super-weakly mixing (super-topologically mixing). However, the
existence of such systems was left open and some natural questions
were raised. When does a system induced by a non-trivial family
(family of more than one map) exhibit transitivity? When does the
dynamics induced by such a family exhibit stronger forms of mixing?
Can the dynamics induced by such a system exhibit dense set of
periodic points? In this paper, we try to answer some of the
questions raised in \cite{ap}. We now give some of the preliminaries
needed to
establish our results.\\

\subsection{Dynamics of a Relation}

Let $(X,d)$ be a compact metric space and let
$F=\{f_1,f_2,\ldots,f_k\}$ be a finite collection of continuous self
maps on $X$. The pair $(X,F)$ generates a multi-valued dynamical
system via the rule $F(x)=\{f_1(x),f_2(x),\ldots,f_k(x)\}$. For
convenience, we denote such systems by $(X,F)$. Such a system
generalizes the concept of the dynamical system generated by a
single map $f$. The objective of a study of dynamical system is to
study the orbit $\{F^n(x): n \in \mathbb{N} \}$ of an arbitrary
point $x$, where $F^n(x)= \{f_{i_1}(x)\circ
f_{i_2}(x)\circ\ldots\circ f_{i_n}(x):  1\leq i_1,
i_2,\ldots,i_n\leq k \} $ is the n-fold composition of $F$. We now
define some of the basic dynamical notions for such a system.\\

A point $x$ is called periodic for if there exists $n\in \mathbb{N}$
such that $x\in F^n(x)$. The least such $n$ is known as the period
of the point $x$. The relation $F$ is transitive if for any pair of
non-empty open sets $U,V$ in $X$, there exists $n\in \mathbb{N}$
such that $F^n(U)\cap V \neq \phi$. The relation $F$ is
super-transitive if for any pair of non-empty open sets $U,V$ in
$X$, there exists $x \in U, n\in \mathbb{N}$ such that
$F^n(x)\subset V$. The relation $F$ is said to be \textit{weakly
mixing} if for every two pair of non-empty open sets $U_1, U_2$ and
$V_1, V_2$, there exists a natural number $n$ such that $F^n(U_i)
\bigcap V_i \neq \phi$, $i=1,2$. The relation $F$ is said to be
\textit{super-weakly mixing} if for every two pair of non-empty open
sets $U_1, U_2$ and $V_1, V_2$, there exists $x_i \in U_i$ and a
natural number $n$ such that $F^n(x_i) \subseteq V_i$, $i=1,2$. The
relation $F$ is said to be \textit{topologically mixing} if for
every pair of non-empty open sets $U, V$ there exists a natural
number $K$ such that $F^n(U) \bigcap V \neq \phi$ for all $n \geq
K$. The relation $F$ is said to be \textit{super-topologically
mixing} if for every pair of non-empty open sets $U, V$ there exists
$K \in \mathbb{N}$ such that for each natural number $n \geq K$,
there exists $x_n \in U$ such that $F^n(x_n) \subseteq V$. A
relation $F$ is sensitive if there exists a $\delta>0$ such that for
each $x\in X$ and each $\epsilon>0$, there exists $n\in \mathbb{N}$
and $y\in X$ such that $d(x,y)<\epsilon$ but
$d_H(F^n(x),F^n(y))>\delta$. It may be noted that as $F^n(x)$ and
$F^n(y)$ are subsets (and need not be elements) of $X$, metric $d$
cannot be used to measure the distance between them. As
$F^n(x),F^n(y)$ are elements in the hyperspace and the metric $d_H$
is a natural extension of the metric $d$, $d_H$ is used to compute
the distance between any $F^n(x)$ and $F^n(y)$ (refer to sec. 1.3
for the details). Incase the relation $F$ is map, the above
definitions coincide with the known
dynamical notions of a system. See \cite{bc,bs,de,ap} for details.\\

%Let $C(X)$ be the collection of continuous self maps on $X$. For any
%$f,g\in C(X)$ define $d(f,g)= \sup \limits_{x\in X} d(f(x),g(x))$.
%It is known that $d$ is a metric on $C(X)$. Also, for any compact
%set $\mathcal{C}$ in $C(X)$, $\bigcup \limits_{f\in \mathcal{C}}
%f(A)$ is compact in $X$, for any compact set $A$ in $X$.\\

\subsection{Some Hyperspace Topologies} Let $(X,\tau)$ be a Hausdorff topological space and let $\Psi$ be a
subfamily of all non-empty closed subsets of $X$. Let $\Psi$ be
endowed with topology $\Delta$, where the topology $\Delta$ is
generated using the topology $\tau$ of $X$. Then the pair
$(\Psi,\Delta)$ is called the hyperspace associated with $(X,\tau)$.
A hyperspace topology is called admissible if the map $x\rightarrow
\{x\}$ is continuous. In this paper, we are interested in only
admissible hyperspaces $(\Psi,\Delta)$ associated with $(X,\tau)$.
We now give some of the notations and terminologies used in the
article.

$CL(X) = \{ A\subset X : A \text{~~is non-empty and closed}\}$\\
$\mathcal{K}(X)=\{A\in CL(X) : A \text{~~is compact}\}$\\
$\mathcal{F}(X)=\{A\in CL(X) : A \text{~~is finite} \}$\\
$\mathcal{F}_n(X)=\{A\in CL(X) : |A|=n, \text{~~where~~} |A| \text{~~denotes number of elements in A} \}$\\
$E^{-}= \{A\in \Psi : A \cap E\neq \phi\}$\\
$E^{+}= \{A\in \Psi : A \subset E\}$\\
$E^{++}= \{A\in \Psi : \exists ~~\epsilon > 0 \text{~~such that~~}
S_{\epsilon}(A) \cap E\}$,\\ where $S_{\epsilon}(A)=\bigcup
\limits_{a\in A} S(a,\epsilon)$, where $S_{\epsilon}(x)=\{y\in X:
d(x,y)<\epsilon\}$\\

We now give some of the standard hyperspace topologies.\\

{\bf Vietoris Topology:} For any $n\in\mathbb{N}$ and any finite
collection of non-empty open sets $\{U_1,U_2,\ldots,U_n\}$, define

$<U_1,U_2,\ldots,U_n> = \{A\in \Psi : A\subset \bigcup
\limits_{i=1}^n U_i, ~~~ A\bigcap U_i \neq \phi ~~~ \forall i\}$

Varying $n\in \mathbb{N}$ and $U_i$ over the collection of all
non-empty open sets of $X$ generates a basis for a topology on the
hyperspace
known as the Vietoris topology. \\

{\bf Hausdorff Metric Topology:} Let $(X,d)$ be a metric space. For
any $A,B \in \Psi$, define $d_H(A,B)= \inf \{ \epsilon>0 :
A\subseteq S_{\epsilon}(B) \text{~~~and~~~} B\subseteq
S_{\epsilon}(A)\}$. Then $d_H$ defines a metric on $\Psi$ and the
topology generated is known as the Hausdorff metric topology. It may
be noted that $d_H(\{x\},\{y\})=d(x,y)$ and hence the metric $d_H$
preserves the metric $d$ on $X$. It is known that the Hausdorff
metric topology and the Vietoris topology coincide, incase $X$ is a
compact metric space. See \cite{be,nai} for details.\\

{\bf Hit and Miss Topology:} Let $\Phi \subseteq CL(X)$ be a
subfamily of all non-empty closed subsets of $X$. The \textit{Hit
and Miss topology} generated by the family $\Phi$ is the topology
generated by sets of the form $U^-$ where $U$ is open in $X$, and
$(E^c)^{+}$ with $E \in \Phi$, where $E^c$ denotes the complement of
$E$. As a terminology, $U$ is called the \textit{hit set} and any
member $E$ of $\Phi$ is referred as the \textit{miss set}.\\

{\bf Hit and Far-Miss Topology:} Let $(X,d)$ be a metric space and
let $\Phi$ be a given collection of closed subsets of $X$. The
\textit{Hit and Far Miss topology} generated by the collection
$\Phi$ is the topology generated by the sets of the form $U^-$ where
$U$ is open in $X$ and $(E^c)^{++}$ with $E \in \Phi$.

Here a sub-basic open set in the hyperspace hits an open set $U
\subset X$ or far misses the complement of a member of $\Phi$ and
hence forms a Hit and Far Miss topology.  It is known that any
topology on the hyperspace is of Hit and Miss or Hit and Far-Miss
type \cite{nai}.\\

{\bf Lower and Upper Vietoris Topology:} Consider the collection of
sets of the form $U^-$ in the hyperspace, where $U$ is non-empty
open set in $X$. The smallest topology on the hyperspace in which
all the sets of the form $U^-$ considered are open is known as the
Lower Vietoris topology.\\

Consider the collection of sets of the form $U^+$ in the hyperspace,
where $U$ is non-empty open set in $X$. The smallest topology on the
hyperspace in which all the sets of the form $U^+$ considered are
open is known as the Upper Vietoris topology. It can be seen that
the Vietoris topology equals the join of Upper Vietoris and Lower
Vietoris topology, and is an example of a Hit and Miss topology.\\

A detailed survey on the hyperspace topologies may be found in
\cite{be,mi,mio,nai}.

\subsection{Dynamics Induced by a Relation}

Let $(X,F)$ be a dynamical system generated by a finite family of
continuous self maps on $X$, say $\{f_1,f_2,\ldots,f_k\}$. For any
$\Psi\subset CL(X)$, the collection $\Psi$ is said to be admissible
with respect to $F$ if $F(A) ~(=\bigcup \limits_{i=1}^k f_i(A))\in
\Psi$ for all $A\in \Psi$. It may be noted that any collection
$\Psi$ admissible to $F$ generates a map $\overline{F}$ on $\Psi$
via the rule  $\overline{F}(A)= F(A)$. Consequently, endowing $\Psi$
with any suitable hyperspace topology (such that the map
$\overline{F}$ is continuous) generates a dynamical system on the
hyperspace.\\

It is interesting to investigate the relation between the dynamical
behavior of $(X,F)$ and the induced system $(\Psi,\overline{F})$. A
special case when the family $F$ is a singleton has been
investigated by several authors and a lot of work in this direction
has already been done\cite{ba,do,ap,pa1,pa2,hrf}. It was proved that
while weakly mixing and topological mixing on the two spaces are
equivalent, transitivity on the base space need not imply
transitivity on the hyperspace\cite{ba,hrf}. While \cite{do}
investigated the topological entropy of induced system of all
non-empty compact subsets of $X$, \cite{pa1} investigated the
problem for a general hyperspace endowed with a general hyperspace
topology. They discussed various dynamical notions like dense
periodicity, transitivity, weakly mixing, topological mixing and
topological entropy. They also investigated metric related dynamical
notions like equicontinuity, sensitivity, strong sensitivity and
Li-Yorke chaoticity\cite{pa1,pa2}. Authors extended their studies to
the general case when the dynamics on $X$ is generated by a finite
family and derived the necessary and sufficient conditions for the
induced system to exhibit various dynamical notions\cite{ap}. In the
process, they discussed properties like dense periodicity,
transitivity, weakly mixing and topological mixing. They introduced
the notion of super-weakly mixing and super-topological mixing for
relations to study the induced maps on the hyperspace. Authors
proved that for any finite relation $F$ on the space $X$, the
induced map on the hyperspace $\mathcal{K}(X)$ is weakly mixing
(resp. topologically mixing) if and only if the relation $F$ is
super-weakly mixing (resp. super-topologically mixing). However, the
existence of any such collection of maps was not confirmed and the
problem was left open. For the sake of
completion, we mention some of their results below.\\

%Many dynamical notions like sensitive dependence on initial
%conditions, existence of Li-Yorke pairs, existence of horseshoe were
%discussed and corresponding results were established
%\cite{ba,do,pa1,pa2,hrf}. In particular, Sharma and Nagar\cite{pa1}
%investigated some of the natural questions arising from this
%setting. They investigated the influence of each of the individual
%units and the role of the underlying hyperspace topology in
%determining the dynamics induced by a continuous self map on a
%general topological space. Investigating the inverse of the problem
%stated, they also investigated the behavior of an individual
%component of the system, given the dynamical behavior of the induced
%system on the hyperspace.
%
%
%A special case of above was discussed by Sharma and
%Nagar\cite{pa1,pa2} when the family $F$ is a singleton.

%Let $\mathbb{F}= \{f_1,f_2,\ldots, f_n\}$ be a family of continuous
%self maps on $X$. We, in this paper prove that if the family
%$\mathbb{F}$ is commutative, (i.e. $f_i\circ f_j = f_j\circ f_i,
%\forall i,j$), then the induced function on the hyperspace cannot be
%transitive and consequently cannot exhibit any of the mixing
%notions. We also prove that the induced map in such a case cannot
%have dense set of periodic points.\\

%A map $f$ is strongly sensitive if there exists a $\delta>0$ such
%that for each $x\in X$ and each neighborhood $U$ of $x$, there
%exists $k\in \mathbb{N}$ such that $dain(f^n(U)>\delta$ for all $n
%\geq k$.

\begin{Theorem}\cite{ap}
Let $\beta$ be any base for the topology on $X$ and $\Delta$ be the
topology on $\Psi \subseteq CL(X)$ such that $U^+$ is non-empty and
$U^+ \in \Delta$ for every $U \in \beta$. Then, the system
$(\Psi,\overline{F})$ is transitive implies that the relation
$(X,F)$ is super-transitive.
\end{Theorem}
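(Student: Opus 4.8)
The plan is to reduce an arbitrary pair of non-empty open sets in $X$ to a pair of basic open sets drawn from $\beta$, lift them to the hyperspace via the $(\cdot)^{+}$ construction, apply transitivity of $\overline{F}$ there, and then extract a single-point witness for super-transitivity by picking any point of the closed set furnished by the hyperspace transitivity.

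First I would fix non-empty open sets $U,V\subseteq X$. Since $\beta$ is a base, choose $U_0,V_0\in\beta$ with $U_0\subseteq U$ and $V_0\subseteq V$. By hypothesis, $U_0^{+}$ and $V_0^{+}$ are non-empty and belong to $\Delta$, i.e.\ they are non-empty open subsets of $\Psi$. Applying transitivity of $(\Psi,\overline{F})$, there is $n\in\mathbb{N}$ with $\overline{F}^{\,n}(U_0^{+})\cap V_0^{+}\neq\phi$. Hence there exists $A\in U_0^{+}$ with $\overline{F}^{\,n}(A)\in V_0^{+}$; unravelling the definitions, $A$ is a non-empty closed subset of $U_0$ and $\overline{F}^{\,n}(A)\subseteq V_0$ (this uses that $\Psi$ is admissible with respect to $F$, which is implicit in the statement since $\overline{F}$ is a well-defined self-map of $\Psi$).

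Next I would record the elementary identity $\overline{F}^{\,n}(A)=F^{n}(A)=\bigcup_{x\in A}F^{n}(x)$, obtained by induction on $n$ from $\overline{F}(A)=F(A)$ together with $F(B)=\bigcup_{y\in B}F(y)$. Since $A\neq\phi$, pick any $x\in A$. Then $x\in A\subseteq U_0\subseteq U$, while $F^{n}(x)\subseteq\bigcup_{y\in A}F^{n}(y)=\overline{F}^{\,n}(A)\subseteq V_0\subseteq V$. Thus, for the arbitrary pair $U,V$ we have produced $x\in U$ and $n\in\mathbb{N}$ with $F^{n}(x)\subseteq V$, which is precisely super-transitivity of $(X,F)$.

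The argument is short, and the only points needing care are routine: that passing to base elements $U_0,V_0$ is legitimate (it is, because $x\in U_0$ forces $x\in U$ and $F^{n}(x)\subseteq V_0$ forces $F^{n}(x)\subseteq V$), and that the iterates of the induced map $\overline{F}$ coincide with the set-iterates of the relation $F$. The conceptual crux --- and the reason transitivity of $\overline{F}$ yields the strictly stronger super-transitivity of $F$ rather than mere transitivity --- is that a hyperspace transitivity witness is a \emph{single} closed set $A$ whose $n$-th image lands entirely inside $V_0$, so every point of $A$ simultaneously has its whole branch set $F^{n}(x)$ trapped in $V_0$; selecting one such point converts the set-level statement into the required point-level statement. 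I do not anticipate any genuine obstacle beyond making these identifications precise.
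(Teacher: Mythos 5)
Your argument is correct and is exactly the standard route for this result (the paper only cites the theorem from \cite{ap} without reproducing a proof): pass to base elements, use that $U_0^{+},V_0^{+}$ are non-empty open in $\Psi$, apply transitivity of $\overline{F}$ to get a single $A\subseteq U_0$ with $F^{n}(A)\subseteq V_0$, and pick a point of $A$. All the identifications you flag (admissibility of $\Psi$, and $\overline{F}^{\,n}(A)=\bigcup_{x\in A}F^{n}(x)$, which stays closed since $A$ is compact and $F$ is a finite family of continuous maps) are routine and handled correctly.
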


\begin{Theorem}\cite{ap}
Let $\mathcal{F}(X) \subseteq \Psi.$ If $F$ is super-weakly mixing,
then $\overline{F}$ is weakly mixing. The converse holds if there
exists a base $\beta$ for topology on $X$ such that $U^+ \in \Delta$
for every $U \in \beta$.
\end{Theorem}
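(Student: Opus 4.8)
The plan is to treat the two implications separately; the converse is short, while the forward implication rests on an auxiliary strengthening of super-weak-mixing. \emph{For the converse}, suppose $\overline{F}$ is weakly mixing and $\beta$ is a base for the topology of $X$ with $U^{+}\in\Delta$ for every $U\in\beta$. To verify that $F$ is super-weakly mixing, fix non-empty open $U_{1},U_{2},V_{1},V_{2}\subseteq X$ and pick $V_{i}'\in\beta$ with $\phi\neq V_{i}'\subseteq V_{i}$. Since every hyperspace topology is of hit-and-miss or hit-and-far-miss type, each $U_{i}^{-}$ lies in $\Delta$; $(V_{i}')^{+}\in\Delta$ by hypothesis; and all four sets $U_{1}^{-},U_{2}^{-},(V_{1}')^{+},(V_{2}')^{+}$ are non-empty because $\mathcal{F}(X)\subseteq\Psi$ contains the relevant singletons. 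Applying weak mixing of $\overline{F}$ to the pairs $\big(U_{1}^{-},(V_{1}')^{+}\big)$ and $\big(U_{2}^{-},(V_{2}')^{+}\big)$ gives $n\in\mathbb{N}$ and $A_{i}\in U_{i}^{-}$ with $\overline{F}^{n}(A_{i})\in(V_{i}')^{+}$, that is, $A_{i}\cap U_{i}\neq\phi$ and $F^{n}(A_{i})\subseteq V_{i}'$. Choosing $x_{i}\in A_{i}\cap U_{i}$ we obtain $F^{n}(x_{i})\subseteq F^{n}(A_{i})\subseteq V_{i}'\subseteq V_{i}$, which is exactly the condition defining super-weak-mixing.

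\emph{For the forward implication}, the hypothesis $\mathcal{F}(X)\subseteq\Psi$ enters through the fact that finite sets are dense in $(\Psi,\Delta)$: a non-empty basic open set $U_{1}^{-}\cap\cdots\cap U_{p}^{-}\cap(\text{miss/far-miss conditions on }E_{1},\dots,E_{s})$ containing a closed set $B$ also contains the finite set $\{a_{1},\dots,a_{p}\}$ for any $a_{j}\in B\cap U_{j}$, since a point of $B$ avoids (resp. stays far from) each closed $E_{i}$ whenever $B$ does. So it suffices, for arbitrary non-empty open $\mathcal{U}_{1},\mathcal{U}_{2},\mathcal{V}_{1},\mathcal{V}_{2}\subseteq\Psi$, to produce a single $n$ and \emph{finite} sets $A_{i}\in\mathcal{U}_{i}$ with $F^{n}(A_{i})\in\mathcal{V}_{i}$. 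Shrink each $\mathcal{U}_{i},\mathcal{V}_{i}$ to a non-empty basic open set; let $G_{i}$ (resp. $H_{i}$) be $X$ minus the union of the miss/far-miss sets of $\mathcal{V}_{i}$ (resp. $\mathcal{U}_{i}$), let $V_{i,t}$ and $U_{i,j}$ be the hit-sets of $\mathcal{V}_{i}$ and $\mathcal{U}_{i}$, and set $\widehat{V}_{i,t}=V_{i,t}\cap G_{i}$, $\widehat{U}_{i,j}=U_{i,j}\cap H_{i}$, all non-empty. If we can find a common $n$, points $a_{i,j}\in\widehat{U}_{i,j}$ with $F^{n}(a_{i,j})\subseteq G_{i}$, and points $b_{i,t}$ in $\widehat{U}_{i,1}$ (or in a fixed non-empty open subset of $H_{i}$ if $\mathcal{U}_{i}$ has no hit-set) with $F^{n}(b_{i,t})\subseteq\widehat{V}_{i,t}$, then $A_{i}:=\{a_{i,j}\}\cup\{b_{i,t}\}\in\mathcal{F}(X)\subseteq\Psi$ meets every $\widehat{U}_{i,j}$ and lies in $H_{i}$, so $A_{i}\in\mathcal{U}_{i}$; and the finite set $F^{n}(A_{i})=\bigcup_{j}F^{n}(a_{i,j})\cup\bigcup_{t}F^{n}(b_{i,t})$ lies in $G_{i}$ and meets each $\widehat{V}_{i,t}$ through the non-empty set $F^{n}(b_{i,t})$, so $F^{n}(A_{i})\in\mathcal{V}_{i}$ (here one uses finiteness of $F^{n}(A_{i})$ to pass from ``avoids'' to ``far from'' in the far-miss case).

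Everything now reduces to the following strengthening, which I regard as the heart of the matter: if $F$ is super-weakly mixing, then for any finitely many pairs of non-empty open sets $(O_{1},O_{1}'),\dots,(O_{m},O_{m}')$ in $X$ there are $n\in\mathbb{N}$ and $x_{l}\in O_{l}$ with $F^{n}(x_{l})\subseteq O_{l}'$ for all $l$; applying it to the finite list of pairs assembled above completes the proof. This is the relational analogue of Furstenberg's theorem that weak mixing implies weak mixing of all orders, and I expect \emph{its} proof — not the hyperspace bookkeeping — to be the main obstacle. I would argue by induction on $m$: the cases $m\le 2$ are the definition of super-weak-mixing, and in the inductive step one applies super-weak-mixing to $(O_{m},O_{m}')$ and $(O_{m+1},O_{m+1}')$, then promotes the two transit \emph{points} to transit \emph{neighbourhoods} using the upper semicontinuity of the finite composition $F^{k}$ (each set $\{z:F^{k}(z)\subseteq O\}$ is open, being a finite intersection of preimages of the open set $O$ under the continuous maps $f_{i_{1}}\circ\cdots\circ f_{i_{k}}$), and feeds a suitably merged pair back into the $m$-pair case. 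Arranging the merge so that a single iterate $n$ simultaneously serves all $m+1$ pairs is the delicate point; once this lemma is in hand, the theorem follows by combining it with the finite-set reduction above.
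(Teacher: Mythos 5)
This theorem is quoted in the paper from \cite{ap} without proof, so there is no in-paper argument to compare against; judging your attempt on its own terms: the converse direction is correct (granting the paper's standing convention that the hyperspace topology is of hit-and-miss type, so the sets $U^-$ are open, and using $\mathcal{F}(X)\subseteq\Psi$ to make the sub-basic sets non-empty), and your reduction of the forward direction to finite sets via density of $\mathcal{F}(X)$ in $(\Psi,\Delta)$ is sound. The genuine gap is exactly where you flag it: the forward implication is made to rest on the claim that super-weak mixing implies its ``all orders'' version (a single $n$ and points $x_l\in O_l$ with $F^{n}(x_l)\subseteq O_l'$ for finitely many pairs), and this claim is asserted, not proved; your inductive sketch stops at the admitted ``delicate point'' of forcing one iterate to serve all pairs, which is precisely the content of the lemma. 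Since a basic open set in the hyperspace can involve arbitrarily many hit sets, the two-pair definition of super-weak mixing does not by itself yield weak mixing of $\overline{F}$, so as written the forward half is incomplete.

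The missing lemma is true and can be closed by the Furstenberg-type transit-set calculus your sketch gestures at. For non-empty open $U,V$ put $N^*(U,V)=\{n\in\mathbb{N}:\exists\,x\in U \text{ with } F^{n}(x)\subseteq V\}$; this is non-empty by super-weak mixing applied to the pair repeated. Given $(U_1,V_1),(U_2,V_2)$, apply super-weak mixing to the pairs $(U_1,U_2)$ and $(V_1,V_2)$ to get $m$, $u\in U_1$ with $F^{m}(u)\subseteq U_2$ and $v\in V_1$ with $F^{m}(v)\subseteq V_2$; since $F^{m}$ has only finitely many continuous branches, there are non-empty open $U_3\subseteq U_1$, $V_3\subseteq V_1$ with $F^{m}(U_3)\subseteq U_2$ and $F^{m}(V_3)\subseteq V_2$. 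If $n\in N^*(U_3,V_3)$, say $x\in U_3$ with $F^{n}(x)\subseteq V_3$, then $n\in N^*(U_1,V_1)$; moreover for any $y\in F^{m}(x)\subseteq U_2$ one has $F^{n}(y)\subseteq F^{n}(F^{m}(x))=F^{m}(F^{n}(x))\subseteq F^{m}(V_3)\subseteq V_2$ (the set identity $F^{n}(F^{m}(x))=F^{n+m}(x)=F^{m}(F^{n}(x))$ holds for any finite family, no commutativity needed), so $n\in N^*(U_2,V_2)$. Hence $N^*(U_3,V_3)\subseteq N^*(U_1,V_1)\cap N^*(U_2,V_2)$, and induction on the number of pairs shows every finite intersection of sets $N^*(O_l,O_l')$ contains a non-empty set of the same form; that is exactly your missing multi-pair statement, and with it your finite-set bookkeeping does complete the forward direction.
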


\begin{Theorem}\cite{ap}
Let $\mathcal{F}(X) \subseteq \Psi.$ If $F$ is super-topologically
mixing, then $\overline{F}$ is topologically mixing. The converse
holds if there exists a base $\beta$ for topology on $X$ such that
$U^+ \in \Delta$ for every $U \in \beta$.
\end{Theorem}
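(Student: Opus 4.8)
The plan is to prove the two implications separately, and in both directions to replace the abstract hyperspace open sets by the concrete sub-basic ``hit'' and ``miss'' descriptions afforded by the fact that $\Delta$ is a hit-and-miss (or hit-and-far-miss) topology.

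For the forward implication, assume $F$ is super-topologically mixing and fix non-empty open $\mathcal U,\mathcal V\subseteq\Psi$. First I would pass to $\Delta$-basic open sets $U_1^-\cap\cdots\cap U_r^-\cap(E_1^c)^+\cap\cdots\cap(E_s^c)^+\subseteq\mathcal U$ containing some $A$, and $V_1^-\cap\cdots\cap V_{r'}^-\cap(D_1^c)^+\cap\cdots\cap(D_{s'}^c)^+\subseteq\mathcal V$ containing some $B$ (the far-miss case is identical, with $\varepsilon$-enlargements in place of the plus-sets). Put $G=\bigcap_j E_j^c$ and $G'=\bigcap_{j'}D_{j'}^c$; these are open, $A\subseteq G$ and $B\subseteq G'$. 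Choosing $a_l\in A\cap U_l$ and $b_{l'}\in B\cap V_{l'}\cap G'$ for each index, set $W_l=U_l\cap G$, an open set containing $a_l$ (when $r=0$ take for $W_1$ any non-empty open subset of $G$). Now apply super-topological mixing to the finitely many pairs $(W_l,G')$ for $l=1,\dots,r$ and $(W_1,V_{l'}\cap G')$ for $l'=1,\dots,r'$; each gives a threshold, and their maximum $K$ serves \emph{all} of them simultaneously for every $n\ge K$, since it is the same exponent $n$ in each instance. This is precisely where the \emph{uniform} threshold in the definition of super-topological mixing — rather than the weaker super-weak mixing — is essential. For $n\ge K$ one then forms the finite set $C_n$ built from the witnesses $c_l\in W_l$ with $F^n(c_l)\subseteq G'$ and the witnesses $c'_{l'}\in W_1$ with $F^n(c'_{l'})\subseteq V_{l'}\cap G'$. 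Since $\mathcal F(X)\subseteq\Psi$ we have $C_n\in\Psi$; one checks that $C_n$ meets every $U_l$ and is contained in $G$, so $C_n\in\mathcal U$, and that $F^n(C_n)$ lies in $G'$ and meets every $V_{l'}$, so $F^n(C_n)\in\mathcal V$ — here the admissibility of $\Psi$ for $F$ guarantees $F^n(C_n)\in\Psi$. Thus $\overline F^n(\mathcal U)\cap\mathcal V\neq\emptyset$ for every $n\ge K$, i.e.\ $\overline F$ is topologically mixing.

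For the converse, assume $\overline F$ is topologically mixing and that $\beta$ is a base for $X$ with $U^+\in\Delta$ for every $U\in\beta$; fix non-empty open $U,V\subseteq X$. Choose $V_0\in\beta$ with $\emptyset\neq V_0\subseteq V$, and consider $\mathcal U:=U^-$ (a hit set, hence sub-basic open in $\Delta$) and $\mathcal V:=V_0^+$ (open by the hypothesis on $\beta$); both are non-empty because $\mathcal F(X)\subseteq\Psi$ contains the singletons $\{x\}$ for $x\in U$ and $\{y\}$ for $y\in V_0$. Topological mixing of $\overline F$ produces $K$ such that for each $n\ge K$ there is $A\in\mathcal U$ with $\overline F^n(A)=F^n(A)\in\mathcal V$, i.e.\ $A\cap U\neq\emptyset$ and $F^n(A)\subseteq V_0\subseteq V$. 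Taking any $x_n\in A\cap U$ yields $x_n\in U$ and $F^n(x_n)\subseteq F^n(A)\subseteq V$, which is exactly what super-topological mixing of $F$ demands.

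I expect the converse to be essentially immediate — the hypothesis $U^+\in\Delta$ is tailored precisely to make $V_0^+$ a legitimate open target for the mixing of $\overline F$. The forward direction is where the care is needed: the finite test set $C_n$ must be rebuilt for each $n$ (only the threshold $K$ is uniform) while simultaneously hitting all the sets $U_l$, staying inside the open set $G$ distilled from the miss constraints, and having its $n$-th image trapped inside $G'$. Getting this bookkeeping right, and knowing in advance that $C_n$ can be chosen finite (so that it lies in $\Psi$), is the crux, and it is exactly why the hypothesis $\mathcal F(X)\subseteq\Psi$ together with the uniformity built into super-topological mixing are both needed.
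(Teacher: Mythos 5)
This theorem is quoted in the paper from the reference \cite{ap} and no proof of it appears in the present text, so there is nothing internal to compare against; judged on its own, your argument is correct and is the standard one for results of this type. The forward direction correctly exploits the uniform threshold in super-topological mixing by taking the maximum of the finitely many thresholds for the pairs $(W_l,G')$ and $(W_1,V_{l'}\cap G')$ and assembling a finite witness set $C_n\in\mathcal F(X)\subseteq\Psi$ for each $n\ge K$, and the converse is the immediate argument via $U^-$ and $V_0^+$. The only point worth flagging is that your forward direction tacitly assumes $\Delta$ is generated by hit and (far-)miss sub-basic sets so that basic neighborhoods have the form $U_1^-\cap\cdots\cap U_r^-\cap(E_1^c)^+\cap\cdots\cap(E_s^c)^+$; the theorem as stated imposes no explicit hypothesis on $\Delta$ in that direction, but this reading is consistent with the paper's standing framework (Section 1.3) and with the Vietoris setting of the cited source, so it is a defect of the statement rather than of your proof.
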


In this paper, we answer some of the questions raised in \cite{ap}
when $X$ is a compact metric space. We prove that the dynamics
generated by any such commutative family cannot be
transitive/super-transitive and hence cannot generate any of the
complex mixing notions on the hyperspace. We also prove that
dynamics induced by such a collection on the hyperspace cannot
exhibit dense set of periodic points. In the end we give example to
prove that the dynamics generated by such family may be sensitive.

\section{Main Results}

\begin{Lemma}
Let $(X,F)$ be a dynamical system generated by a finite commutative
family of continuous self maps on X. Then, $F$ is super-transitive
if and only if for each non-empty open set $U$ and any point $x\in
X$, there exists $u\in U$ and a sequence $(n_i)$ in $\mathbb{N}$
such that $F^{n_i}(u)\ra \{x\}$ in $(\mathcal{K}(X),d_H)$.
\end{Lemma}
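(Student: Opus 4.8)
The plan is to prove the two implications separately. The reverse implication is the short one: given non-empty open sets $U,V\subseteq X$, fix any $x\in V$ and $\epsilon>0$ with $S(x,\epsilon)\subseteq V$; the hypothesis supplies $u\in U$ and a sequence $(n_i)$ with $F^{n_i}(u)\ra\{x\}$ in $(\mathcal{K}(X),d_H)$, so for all large $i$ we have $d_H(F^{n_i}(u),\{x\})<\epsilon$, whence $F^{n_i}(u)\subseteq S_{\epsilon}(\{x\})=S(x,\epsilon)\subseteq V$. Taking $n=n_i$ for such an $i$ gives exactly super-transitivity for the pair $U,V$.

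For the forward implication, assume $F$ is super-transitive and fix a non-empty open set $U$ together with a point $x\in X$. I would construct, by induction on $m\geq 1$, non-empty open sets $V_1\supseteq V_2\supseteq\cdots$ (writing $V_0:=U$) and natural numbers $n_1,n_2,\ldots$ such that $\overline{V_m}\subseteq V_{m-1}$ and $F^{n_m}(w)\subseteq S(x,1/m)$ for every $w\in V_m$. For the inductive step, apply super-transitivity to the pair $(V_{m-1},S(x,1/m))$ to obtain $u_m\in V_{m-1}$ and $n_m\in\mathbb{N}$ with $F^{n_m}(u_m)\subseteq S(x,1/m)$. Writing $F^{n_m}(\cdot)=\{g(\cdot):g\in G_{n_m}\}$, where $G_{n_m}$ is the \emph{finite} set of all length-$n_m$ compositions of members of $F$, each $g\in G_{n_m}$ is continuous, so $W_m:=\bigcap_{g\in G_{n_m}}g^{-1}(S(x,1/m))$ is open, contains $u_m$, and satisfies $F^{n_m}(w)\subseteq S(x,1/m)$ for all $w\in W_m$. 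Then $W_m\cap V_{m-1}$ is non-empty and open, and since a compact metric space is regular I can choose a non-empty open $V_m$ with $\overline{V_m}\subseteq W_m\cap V_{m-1}$, closing the induction.

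To finish, I would observe that $\{\overline{V_m}\}_{m\geq 1}$ is a decreasing family of non-empty closed subsets of the compact space $X$, so by the finite intersection property there is a point $u\in\bigcap_{m\geq 1}\overline{V_m}$. Then $u\in\overline{V_1}\subseteq U$, and for every $m$ we have $u\in\overline{V_{m+1}}\subseteq V_m$, hence $F^{n_m}(u)\subseteq S(x,1/m)$; since $F^{n_m}(u)$ is a non-empty finite set, both inclusions in the definition of $d_H$ are immediate and we get $d_H(F^{n_m}(u),\{x\})\leq 1/m$. Thus $F^{n_m}(u)\ra\{x\}$ in $(\mathcal{K}(X),d_H)$, as required.

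The step I expect to need the most care is the passage to the limit: one cannot simply intersect the open sets $V_m$, since that intersection may be empty, so the construction must shrink to closures at each stage ($\overline{V_m}\subseteq V_{m-1}$) and then invoke compactness of $X$ to extract the witnessing point $u$. I would also remark that commutativity of $F$ does not appear to be essential to this lemma — $F^n(x)$ is a finite (hence compact) subset of $X$ for \emph{any} finite family, and the argument uses only super-transitivity, continuity of the finitely many compositions, and compactness; presumably the hypothesis is kept here for uniformity with the subsequent results.
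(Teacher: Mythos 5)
Your proof is correct and follows essentially the same route as the paper: the same nested construction of shrinking open sets using continuity of the finitely many length-$n_m$ compositions, followed by compactness to extract the witnessing point $u$, and the same short argument for the converse. Your extra care in requiring $\overline{V_m}\subseteq V_{m-1}$ (so that the limit point certifiably lies in $U$) is a minor tightening of the paper's version, and your closing remark that commutativity is not actually used matches the paper's own Remark following the Lemma.
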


\begin{proof}

Let $F=\{f_1,f_2,\ldots,f_k\}$ and let $d$ be a compatible metric on
$X$. Let $U$ be a non-empty open subset of $X$ and let $V_1=
S_1(x)$. As $F$ is super-transitive, there exists $x_1 \in U$ and
$n_1 \in \mathbb{N}$ such that $F^{n_1}(x_1)\subset V_1$. As
$F^{n_1}(x)=\{f_{i_1}\circ f_{i_2}\circ\ldots\circ f_{i_{n_1}}:
1\leq i_1,i_2, \ldots,i_{n_1}\leq k\}$ and each $f_{i_1}\circ
f_{i_2}\circ\ldots\circ f_{i_{n_1}}$ is continuous (and are finitely
many maps), there exists a neighborhood $U_1$ of $x_1$ such that
$U_1\subset U$ and $F^{n_1}(\overline{U_1})\subset V_1$.

Let $V_2=S_{\frac{1}{2}}(x)$. As $F$ is super-transitive (applying
transitivity to $U_1$ and $V_2$), there exists $x_2 \in U_1$ and
$n_2 \in \mathbb{N}$ such that $F^{n_2}(x_2)\subset V_2$. As
$F^{n_2}(x)=\{f_{i_1}\circ f_{i_2}\circ\ldots\circ f_{i_{n_2}}:
1\leq i_1,i_2, \ldots,i_{n_2}\leq k\}$ and each $f_{i_1}\circ
f_{i_2}\circ\ldots\circ f_{i_{n_2}}$ is continuous (and number of
maps are finite), there exists a neighborhood $U_2$ of $x_2$ such
that $U_2\subset U_1$ and $F^{n_2}(\overline{U_2})\subset V_2$.

Inductively, let $U_r, V_r$ of non-empty open sets in $X$ such that
$U_r\subset U_{r-1}$,  $V_r = S_{\frac{1}{r}}(x)$ and
$F^{n_r}(\overline{U_r})\subset V_r$. Let $V_{r+1} =
S_{\frac{1}{r+1}}(x)$. As $F$ is super-transitive (applying
super-transitivity to the pair $(U_r, V_{r+1})$), there exists
$x_{r+1} \in U_r$ and $n_{r+1} \in \mathbb{N}$ such that
$F^{n_{r+1}}(x_{r+1})\subset V_{r+1}$. One again, as
$F^{n_{r+1}}(x)=\{f_{i_1}\circ f_{i_2}\circ\ldots\circ
f_{i_{n_{r+1}}}: 1\leq i_1,i_2, \ldots,i_{n_{r+1}}\leq k\}$ and each
$f_{i_1}\circ f_{i_2}\circ\ldots\circ f_{i_{n_{r+1}}}$ is continuous
(and number of maps are finite), there exists a neighborhood
$U_{r+1}$ of $x_{r+1}$ such that $U_{r+1}\subset U_r$ and
$F^{n_{r+1}}(\overline{U_{r+1}})\subset V_{r+1}$.

Consequently, we obtain a nested sequence of open sets $(U_r)$
contained in $U$ such that $d_H(F^{n_r}(u),x)<\frac{1}{r}$ for any
$u\in U_r$.  As $\overline{U_i}$ is a decreasing sequence of
non-empty compact subsets of $X$, $A= \cap \overline{U_i}\subset U$
is non-empty. Let $u \in A$, then $u \in U_r$ and hence $F^{n_r}(u)
\subset V_r$ for all $r$. Consequently,
$d_H(F^{n_r}(u),x)<\frac{1}{r}$ for all $r$ and hence $F^{n_i}(u)\ra
\{x\}$.

Conversely, let $U,V$ be a pair of non-empty open sets ion $X$. For
any $\{v\}\in <V>$, there exists $u\in U$ and a sequence $(n_i)$ in
$\mathbb{N}$ such that $F^{n_i}(u)\ra \{v\}$. Consequently there
exists $r\in \mathbb{N}$ such that $F^{n_k}(u)\subset V ~ \forall
~k\geq r$ and hence $F$ is super-transitive.
\end{proof}

\begin{Remark}
It may be noted that transitivity of a system generated by single
map $f$ is equivalent to the existence of a dense orbit.
Consequently, the above result is trivially true when the family $F$
is a singleton. Thus the result above is a generalization of the
known result to the case when the system is generated using more
than one map. It may be noted that $F^n$ is a union of repeated
application of the maps $\{f_1,f_2,\ldots,f_k\}$ (n times) in all
possible orders and continuity of each component map of $F^n$
guarantees extension of a behavior at a point to a similar behavior
in the neighborhood of the point. Further, as the number of maps
constituting $F^n(x)$ is finite at each iteration, a neighborhood
exhibiting similar behavior for all components of $F^n$ is ensured
and hence the existence of $U_i$ is guaranteed. It is worth
mentioning that the commutativity of the family $F$ is not used for
establishing the result. Hence the result is true even when the
generating family $F$ is non-commutative.
\end{Remark}

\begin{Proposition}
Let $(X,F)$ be a dynamical system generated by a finite commutative
family of continuous self maps on X. If $F$ is super-transitive,
then $F$ is a singleton.
\end{Proposition}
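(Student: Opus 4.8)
The plan is to show directly that super-transitivity forces all the maps in $F$ to coincide. Write $F=\{f_1,\ldots,f_k\}$ and consider the common coincidence set
\[
C=\{y\in X: f_1(y)=f_2(y)=\cdots=f_k(y)\},
\]
which is closed, and which equals $X$ precisely when $f_1=\cdots=f_k$, i.e. when $F$ is a singleton. So the whole problem reduces to proving $C=X$.

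\emph{Step 1: every point of $X$ lies in $f_l(C)$ for each $l$.} Fix $x\in X$ and apply the Lemma with $U=X$ to obtain $u\in X$ and a sequence $(n_i)$ with $F^{n_i}(u)\to\{x\}$ in $(\mathcal{K}(X),d_H)$. Since $F^{n_i}(u)=\overline{F}\bigl(F^{n_i-1}(u)\bigr)=\bigcup_{w\in F^{n_i-1}(u)}F(w)$, any choice of $v_i\in F^{n_i-1}(u)$ gives $\emptyset\neq F(v_i)\subseteq F^{n_i}(u)$, hence $F(v_i)\to\{x\}$ as well. By compactness of $X$, pass to a subsequence along which $v_i\to v$; continuity of $\overline{F}$ on $(\mathcal{K}(X),d_H)$ then gives $F(v)=\lim F(v_i)=\{x\}$. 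Thus $f_1(v)=\cdots=f_k(v)=x$, so $v\in C$ and $f_l(v)=x$ for every $l$. As $x$ was arbitrary, $f_l(C)=X$ for every $l$.

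\emph{Step 2: $C$ is forward invariant, i.e. $f_l(C)\subseteq C$ for each $l$.} This is the point at which commutativity is used (recall the Remark that it was not needed for the Lemma). Let $y\in C$ and set $y'=f_1(y)=\cdots=f_k(y)$, so $f_l(y)=y'$. For any index $j$, commutativity gives
\[
f_j(y')=f_j\bigl(f_l(y)\bigr)=f_l\bigl(f_j(y)\bigr)=f_l(y'),
\]
and the right-hand side does not depend on $j$; hence $f_1(y')=\cdots=f_k(y')$, i.e. $y'\in C$, so $f_l(y)=y'\in C$.

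Combining the two steps, $X=f_l(C)\subseteq C\subseteq X$, so $C=X$; equivalently $f_1=\cdots=f_k$, so $F$ is a singleton. The substantive idea is the forward invariance of the coincidence set in Step 2 — this is where commutativity bites, and without it the argument collapses. Step 1 is a routine ``one step back plus compactness'' extraction from the Lemma; the only thing needing care there is checking that $F(v_i)\to\{x\}$ in the Hausdorff metric, which is immediate because $F(v_i)$ is a non-empty subset of $F^{n_i}(u)$.
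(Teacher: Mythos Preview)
Your proof is correct and takes a genuinely different route from the paper's. The paper argues pointwise with an $\epsilon$--$\delta$ estimate: given $v$, it uses the Lemma to get $F^{n_i}(u)\to\{v\}$, then exploits that the diameter of $F^{n_i}(u)$ shrinks to zero together with uniform continuity and one application of commutativity (rewriting $g\circ f\circ g^{n_i-1}$ as $f\circ g^{n_i}$) to force $d(f^{n_i+1}(u),g^{n_i+1}(u))<\epsilon$; since these subsequences converge to $f(v)$ and $g(v)$, one gets $d(f(v),g(v))\le\epsilon$ for every $\epsilon$, hence $f=g$ on a dense set and so everywhere. Your argument replaces this analytic estimate with a structural one: the ``one step back plus compactness'' extraction in Step~1 shows that every point of $X$ is the common image of some coincidence point, i.e.\ $f_l(C)=X$, and then commutativity enters only through the clean observation that $C$ is forward invariant. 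What your approach buys is conceptual clarity --- the role of commutativity is isolated in a single algebraic line, and no uniform-continuity bookkeeping is needed; what the paper's approach buys is that it stays closer to the orbit sequences themselves and does not require the auxiliary compactness extraction of the preimage point $v$. Both rest on the same Lemma, and your remark that Step~2 is exactly where commutativity bites matches the paper's own Remark on the necessity of that hypothesis.
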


\begin{proof}
Let $U$ be an non-empty open set in $X$ and $\epsilon>0$ be a real
number. Let $V$ be a non-empty open set and let $v\in V$. By Lemma,
there exists $u\in U$ and a sequence $(n_i)$ such that
$F^{n_i}(u)\ra v$. As each $f_i$ is continuous, there exists
$\delta>0$ such that $d(x,y)<\delta$ implies
$d(f_i(x),f_i(y))<\frac{\epsilon}{2}$ for all $i=1,2,\ldots,n$. As
$F^{n_i}(u)\ra v$, for any two distinct elements $f,g$ of $F$,
$f^{n_i}(u)\ra v$ and $g^{n_i}(u)\ra v$. Further, as
$d(F^{n_i}(u))\ra 0$ (where $d(A)$ denotes the diameter of the set
$A$), there exists $r\geq 1$ such that the relation $d(f^{n_i}(u),
g^{n_i}(u))<\delta$ and $d(f\circ g^{n_i-1}(u), g^{n_i}(u))<\delta$
is true for all $i\geq r$.\\

Consequently, $d(f^{n_i+1}(u),f \circ
g^{n_i}(u))<\frac{\epsilon}{2}$ and $d(f\circ g^{n_i}(u),
g^{n_i+1}(u))<\frac{\epsilon}{2}$ for all $i\geq r$. Using triangle
inequality we get $d(f^{n_i+1}(u), g^{n_i+1}(u))<\epsilon$ for all
$i\geq r$. Consequently, $(f^{n_i+1}(u)), (g^{n_i+1}(u))$ are
parallel sequences and hence have the same limit(say $y$). Also
$f^{n_i}(u)\ra v$ and $g^{n_i}(u)\ra v$ implies $f^{n_i+1}(u)\ra
f(v)$ and $g^{n_i+1}(u)\ra g(v)$. Consequently, $f(v)=g(v)$. As the
argument holds for any open set $V$, the points at which $f$ and $g$
coincide is dense in $X$. Hence $f=g$.
\end{proof}

\begin{Remark}
The above proof establishes that the system induced by more than one
map cannot be super-transitive. The proof establishes that under
stated conditions, if $(f^{n_i}(x))$ and $(g^{n_i}(x))$ are parallel
then $(f^{n_i+1}(x))$ and $(g^{n_i+1}(x))$ are also parallel and
hence have the same limit. Further as $f^{n_i}(x),g^{n_i}(x)$
converge to $v$, $f(v)$ and $g(v)$ are unique limit points of
$(f^{n_i+1}(x))$ and $(g^{n_i+1}(x))$ respectively. Consequently $f$
and $g$ coincide on a dense set and hence are equal.
\end{Remark}

\begin{Remark}
The above result proves that if the dynamics on $X$ is generated by
more than one map then the system cannot be super-transitive and
hence cannot exhibit any stronger forms of mixing. Further as
super-transitivity of the system $(X,F)$ is necessary for any
induced admissible system $(\Psi,\overline{F})$ to be
transitive\cite{ap}, the system induced on the hyperspace by a
family of more than one map cannot be transitive and hence cannot
exhibit stronger notions of mixing. In light of the remark stated,
we obtain the following corollary.
\end{Remark}

\begin{Cor}
Let $(X,F)$ be a dynamical system generated by a finite commutative
family of continuous self maps on X. Let $(\Psi,\Delta)$ be the
associated hyperspace and let $\overline{F}$ be the corresponding
induced map. If the family $F$ contains more than one map then
$\overline{F}$ cannot be transitive.
\end{Cor}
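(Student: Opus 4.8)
The proof will be short because the substantive work has already been done: it only remains to combine the Proposition with the transitivity--transfer theorem of \cite{ap} quoted above (the one asserting that transitivity of $(\Psi,\overline{F})$ forces super-transitivity of $(X,F)$ whenever $\Delta$ contains the sets $U^{+}$ for $U$ ranging over a base of $X$). The plan is to argue by contradiction.

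Assume that $\overline{F}$ is transitive on $(\Psi,\Delta)$. First I would record that the associated hyperspace topology $\Delta$ meets the hypothesis needed to invoke the quoted theorem, namely that there is a base $\beta$ for the topology of $X$ such that $U^{+}$ is non-empty and $U^{+}\in\Delta$ for every $U\in\beta$. For the Vietoris topology (equivalently, for the Hausdorff metric topology, since $X$ is a compact metric space) this is automatic, because for any non-empty open $U$ one has $<U> = U^{+}$; for a general admissible hit and miss topology it is the natural standing requirement on the miss family, and in any case it is precisely the condition under which the transfer result applies. Granting this, the quoted theorem yields that the relation $(X,F)$ is super-transitive.

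Next I would apply the Proposition: since $F$ is a finite commutative family of continuous self maps of the compact metric space $X$ and $F$ is super-transitive, $F$ must be a singleton. This contradicts the hypothesis that $F$ contains more than one map. Hence no transitive $\overline{F}$ can exist; that is, $\overline{F}$ cannot be transitive, and (as noted in the preceding remark) it cannot exhibit any of the stronger mixing notions either.

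The only point demanding care is the one flagged in the second paragraph --- pinning down exactly which hyperspace topologies make the transitivity-transfer step legitimate --- but this is a matter of topological bookkeeping rather than a genuine dynamical obstacle; all the real content of the corollary is carried by the Proposition, whose own proof rests on propagating the coincidence $f(v)=g(v)$ across a dense set via the parallel-sequences argument.
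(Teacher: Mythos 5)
Your proposal is correct and follows exactly the route the paper itself takes (in the Remark preceding the Corollary): invoke the quoted theorem from \cite{ap} to pass from transitivity of $\overline{F}$ to super-transitivity of $F$, then apply the Proposition to force $F$ to be a singleton, contradicting the hypothesis. Your extra care about verifying that $\Delta$ contains the sets $U^{+}$ for a base of $X$ is a reasonable and slightly more explicit treatment of a hypothesis the paper leaves implicit in the word ``admissible,'' but it does not change the argument.
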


We now show that the dynamics induced by a commutative family on the
hyperspace cannot exhibit dense set of periodic points.

\begin{Proposition}
Let $(X,F)$ be a dynamical system generated by a finite commutative
family of continuous self maps on X and let $(\mathcal{K}(X),
\overline{F})$ be the induced system endowed with the Vietoris
topology on the hyperspace. If $(\mathcal{K}(X), \overline{F})$
exhibits dense set of periodic points then $F$ is a singleton.
\end{Proposition}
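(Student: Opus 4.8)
The plan is to argue by contradiction: assume $F = \{f_1, \ldots, f_k\}$ is commutative, that $(\mathcal{K}(X), \overline{F})$ has a dense set of periodic points, and that $k > 1$; then produce two distinct maps in $F$ that agree on a dense set, forcing $k=1$. The key leverage point is that a set $A$ periodic for $\overline{F}$ of period $p$ satisfies $F^{p}(A) = A$, hence $F^{mp}(A) = A$ for all $m$, so in particular $A$ is $\overline{F}^{p}$-invariant and every point of $A$ re-enters $A$ under the appropriate composition. I would first record the structural consequence of density of periodic points in the Vietoris topology: given any non-empty open $U \subseteq X$, the basic open set $\langle U \rangle \subseteq \mathcal{K}(X)$ must contain a periodic set $A$, so there is a periodic compact set $A$ with $A \subseteq U$; letting $U$ shrink, we get periodic compact sets of arbitrarily small diameter concentrated near any prescribed point. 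This is the analogue, in the present setting, of the mechanism used in Proposition above, and it is what lets us manufacture the "parallel sequences" phenomenon.

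Next I would extract, from such a small periodic set $A$ with $F^{p}(A) = A$, a point $a \in A$ and a return structure: since $a \in A = F^{p}(A)$, there is a length-$p$ word $w$ in the generators with $a = f_{w}(a)$-type membership, and iterating, for each $m$ the point $a$ lies in $F^{mp}(a) \subseteq A$. Because $A$ has small diameter $\eta$, all the sets $F^{mp}(a)$ lie within an $\eta$-ball; this is exactly the situation "$d(F^{n}(u)) \to 0$ along a subsequence with $F^{n}(u)$ trapped near a point" that drives the earlier Proposition. Now fix two distinct $f, g \in F$. Commutativity gives $f \circ g^{s} = g^{s} \circ f$ for all $s$, so on the orbit of $a$ the maps $f$ and $g$ nearly commute with the whole return word, and the estimate scheme of the previous Proposition applies verbatim: uniform continuity of the finitely many generators turns the smallness of $\mathrm{diam}\,F^{mp}(a)$ into $d\big(f(z), g(z)\big) < \varepsilon$ for suitable points $z$ accumulating near a chosen point of the small set $A$. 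Since the small periodic set $A$ could be placed with $A \subseteq U$ for an arbitrary non-empty open $U$, the coincidence set $\{z : f(z) = g(z)\}$ meets every non-empty open set, hence is dense; being closed (as $f,g$ are continuous), it is all of $X$, so $f = g$, contradicting $k>1$. Therefore $F$ is a singleton.

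The main obstacle I anticipate is the bookkeeping with the words: unlike the single-map case, $F^{mp}(a)$ is a union over all length-$mp$ words, and to run the triangle-inequality argument I need to pick, for a given generator $f$, a single point $b_m = f_{w_m}(a) \in A$ (for some length-$mp$ word $w_m$ returning into $A$) and compare $f(b_m)$ with $g(b_m)$ via a near-return word that starts with $f$ versus one that starts with $g$; here commutativity is essential, because it lets me move the leading $f$ or $g$ past the rest of the word so that both compositions differ only by replacing $g^{mp}$-type tails with $f g^{mp-1}$-type tails, whose images are $\delta$-close by the diameter bound. The delicate point is verifying that the relevant points stay in a compact set on which a single modulus of uniform continuity works for all $f_i$ — which holds since $X$ is compact — and that the subsequence $(mp)$ can be chosen so the diameters actually tend to $0$; for the latter I would, if needed, pass to a convergent subsequence of the periodic sets $A$ (of shrinking diameter) in $\mathcal{K}(X)$ and use that their diameters go to $0$ by construction. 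Once these are in place, the computation is the same triangle-inequality chain as in the proof of the preceding Proposition, so I would not reproduce it in detail but cite that argument.
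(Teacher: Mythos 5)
Your proposal follows essentially the same route as the paper's proof: use density of periodic points in the Vietoris topology to place a periodic set $A$ with $\overline{F}^n(A)=A$ inside a ball $U$ whose radius is controlled by the modulus of uniform continuity of $f$ and $g$, observe that $f^n(a)$ and $g\circ f^{n-1}(a)$ both lie in $A\subset U$ and hence are $\delta$-close, apply one more generator together with commutativity to compare $f$ and $g$ at the point $f^n(a)$, and finish with a triangle inequality. The one step to tighten is the final deduction: from $d(f(z),g(z))<\varepsilon$ at points $z$ that vary with $\varepsilon$ you cannot directly conclude that the exact coincidence set $\{z: f(z)=g(z)\}$ meets every open set; instead, as in the paper, fix the target point $x$ in advance, take $U=S_{\delta/2}(x)$, route the triangle inequality through $x$ itself (using that $z=f^n(a)$ lies within $\delta$ of $x$), and obtain $d(f(x),g(x))<\varepsilon$ for every $\varepsilon$, hence $f(x)=g(x)$ for every $x$.
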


\begin{proof}
Let if possible, the induced mao $\overline{F}$ exhibit dense set of
periodic points. Let $x\in X$ be arbitrary and let $\epsilon>0$ be
given. For any two members $f$ and $g$ of $F$, as $f$ and $g$ are
continuous on a compact set, they are uniformly continuous, i.e.
there exists $\delta>0$ such that whenever $d(x,y)<\delta$, we have
$d(f(x),f(y))<\frac{\epsilon}{3}$ and
$d(g(x),g(y))<\frac{\epsilon}{3}$.

Let $U= S_{\frac{\delta}{2}}(x)$. As $<U>$ is open in the hyperspace
and $\overline{F}$ has dense set of periodic points, there exists
$A\in <U>$ and $n\in \mathbb{N}$ such that $\overline{F}^n(A)=A$.
Let $a\in A$. As $\overline{F}^n(A)=A\subset U$ we have
$d(x,f^n(a))<\frac{\delta}{2}$. Consequently, we have $d(f(x),
f^{n+1}(a))<\frac{\epsilon}{3}$ and $d(g(x), g\circ
f^{n}(a))<\frac{\epsilon}{3}$. Also $\overline{F}^n(A)=A\subset U$,
implies $d(f^n(a), g\circ f^{n-1}(a))<\delta$ and hence
$d(f^{n+1}(a), g\circ f^n(a))<\frac{\epsilon}{3}$ (as $F$ is
commutative). Using triangle inequality we have, $d(f(x),g(x)) \leq
d(f(x),f^{n+1}(a))+ d(f^{n+1}(a), g\circ f^n(a))+ d(g\circ f^n(a),
g(x)) < \epsilon$.\\

As $\epsilon>0$ was arbitrary, $d(f(x),g(x))=0$ which implies $f(x)=
g(x)$. As the proof holds for any $x\in X$, any two members of the
family $F$ coincide and hence $F$ is a singleton.
\end{proof}

\begin{Remark}
The result establishes that if the dynamics on the space $X$ is
generated by more than one map then the induced dynamics on the
hyperspace cannot exhibit dense set of periodic points. The proof
uses the openness of the sets of the form $<U>$, where $U$ is
non-empty open in $X$ and does utilize the complete structure of the
Vietoris topology. Further, the proof does not utilize the structure
of the hyperspace $\mathcal{K}(X)$ and holds good for any general
admissible hyperspace $(\Psi,\Delta)$. Hence the induced map cannot
have dense set of periodic points in this case and the result is
true for the induced system when the admissible hyperspace is
endowed with topology finer than the upper Vietoris topology. In
light of the remark stated, we get the following corollary.
\end{Remark}

\begin{Cor}
Let $(X,F)$ be a dynamical system generated by a finite commutative
family of continuous self maps on X and let $(\Psi, \overline{F})$
be the induced system endowed with any hyperspace topology finer
than the upper Vietoris topology. If the family $F$ contains more
than one map then $\overline{F}$ cannot exhibit dense set of
periodic points.
\end{Cor}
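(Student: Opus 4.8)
The plan is to observe that the proof of the preceding Proposition never used anything about the Vietoris topology beyond the fact that, for a single non-empty open set $U\subseteq X$, the basic set $<U>$ is open and non-empty in the hyperspace, together with the purely metric estimate carried out there. Note that $<U>=\{A\in\Psi : A\subseteq U,\ A\cap U\neq\phi\}=\{A\in\Psi : A\subseteq U\}=U^{+}$, since every member of $\Psi\subseteq CL(X)$ is non-empty. By definition the upper Vietoris topology is the coarsest topology on $\Psi$ in which every $U^{+}$ is open; hence any topology $\Delta$ finer than it has each $U^{+}=<U>$ open. Moreover $U^{+}$ is non-empty whenever $U\neq\phi$: admissibility of $(\Psi,\Delta)$ presupposes $\{x\}\in\Psi$ for all $x$, so $\{x\}\in U^{+}$ for any $x\in U$. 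Thus the argument of the Proposition transports to the present, weaker setting.

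Concretely, I would argue by contradiction. Suppose $\overline{F}$ has a dense set of periodic points, and let $f,g$ be two distinct members of $F$. Fix $x\in X$ and $\epsilon>0$. Using uniform continuity of $f$ and $g$ on the compact space $X$, choose $\delta>0$ so that $d(p,q)<\delta$ forces $d(f(p),f(q))<\frac{\epsilon}{3}$ and $d(g(p),g(q))<\frac{\epsilon}{3}$. Put $U=S_{\frac{\delta}{2}}(x)$; then $<U>=U^{+}$ is a non-empty open subset of $(\Psi,\Delta)$, so by density of periodic points there exist $A\in <U>$ and $n\in\mathbb{N}$ with $\overline{F}^{n}(A)=A$. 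Pick any $a\in A$.

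Since $\overline{F}^{n}(A)=F^{n}(A)=A\subseteq U=S_{\frac{\delta}{2}}(x)$, both $f^{n}(a)$ and $g\circ f^{n-1}(a)$ lie in $U$, hence are within $\frac{\delta}{2}$ of $x$ and within $\delta$ of one another; in particular $d(x,f^{n}(a))<\delta$. Applying the chosen $\delta$ and commutativity ($f\circ g\circ f^{n-1}=g\circ f^{n}$) yields $d(f(x),f^{n+1}(a))<\frac{\epsilon}{3}$, $d(f^{n+1}(a),g\circ f^{n}(a))<\frac{\epsilon}{3}$ and $d(g\circ f^{n}(a),g(x))<\frac{\epsilon}{3}$, so that $d(f(x),g(x))<\epsilon$ by the triangle inequality. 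As $\epsilon>0$ was arbitrary, $f(x)=g(x)$; as $x$ was arbitrary, $f=g$, contradicting that $F$ contains more than one map. Hence $\overline{F}$ cannot exhibit a dense set of periodic points.

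The only point requiring care — and the only place where the hypothesis ``finer than the upper Vietoris topology'' is used — is the first paragraph: one must check that the sets $<U>$ appearing in the Proposition are still available as non-empty open sets in the weaker ambient topology $\Delta$, and that ``periodic point of $\overline{F}$'' still makes sense for a general admissible $\Psi$ in place of $\mathcal{K}(X)$. Both are immediate from the definitions (admissibility supplies the singletons, and $<U>=U^{+}$ is open by the assumption on $\Delta$), so I expect no genuine obstacle beyond this bookkeeping; the substantive metric computation is exactly the one already done in the Proposition.
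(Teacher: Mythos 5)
Your proposal is correct and takes essentially the same route as the paper: the paper derives this corollary by remarking that the Proposition's proof uses only the openness and non-emptiness of the single-set basic opens $<U>=U^{+}$, which persist in any admissible topology finer than the upper Vietoris topology, and your write-up simply makes this observation explicit and then repeats the Proposition's uniform-continuity and triangle-inequality computation verbatim. No gap.
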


\begin{Remark}
The above corollary establishes the generalization of the proved
result to the system induced on a more general admissible
hyperspace. Further, it is worth mentioning that if any of the
members of $F$ exhibit dense set of periodic points, then the system
$(X,F)$ exhibits dense set of periodic points as $f^n(x)\in F^n(x)$
for any member $f$ of $F$. Hence if the dynamics on $X$ is induced
by more than one function, the system may exhibit dense set of
periodic points but the induced system can not exhibit dense set of
periodic points which is contrary to the case when $F$ is a
singleton. In the light of the remark stated, we get the following
corollary.
\end{Remark}

\begin{Cor}
Let $(X,F)$ be a dynamical system and let $(\Psi, \overline{F})$ be
the corresponding induced system.  Then, $(X,F)$ has dense set of
periodic points $\nRightarrow$ $(\Psi, \overline{F})$ has dense set
of periodic points.
\end{Cor}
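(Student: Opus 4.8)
The plan is to prove this non-implication by exhibiting one explicit counterexample: a compact metric space $X$ together with a commutative family $F$ consisting of more than one continuous self map, for which $(X,F)$ has a dense set of periodic points while the induced system on the hyperspace does not. The point is precisely that the previous results forbid dense periodicity for the induced system exactly when $F$ is a genuinely multivalued commutative relation, so it suffices to find such an $F$ whose base system still has dense periodicity.

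First I would take $X=S^1$, the unit circle, and $F=\{f_1,f_2\}$ with $f_1=\mathrm{id}_X$ and $f_2(z)=z^2$ the doubling map. Then: (i) $F$ is commutative, since the identity commutes with every map; (ii) $F$ contains more than one map; and (iii) every point $x\in X$ satisfies $x\in\{f_1(x),f_2(x)\}=F(x)$, so every point is periodic of period $1$ for $(X,F)$, whence $(X,F)$ trivially has a dense set of periodic points. (If one prefers both generators to be non-trivial, take instead $f_1(z)=z^2$ and $f_2(z)=z^3$ on $S^1$; these commute and each separately has dense periodic points, so $F^n(x)\ni f_1^n(x)$ again forces dense periodicity for $(X,F)$.)

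Next I would check that the hyperspace $\mathcal{K}(X)$ endowed with the Vietoris topology is admissible with respect to $F$: for $A\in\mathcal{K}(X)$, $F(A)=A\cup f_2(A)$ is a finite union of compact sets, hence compact, so $\overline{F}$ is a well-defined induced map on $\mathcal{K}(X)$. Since the Vietoris topology is finer than the upper Vietoris topology, the Corollary on dense periodicity proved above applies to $(X,F)$: because $F$ is commutative and contains more than one map, $\overline{F}$ cannot have a dense set of periodic points. Combining this with the fact that $(X,F)$ does have a dense set of periodic points gives exactly $(X,F)\text{ has dense periodic points}\nRightarrow(\Psi,\overline{F})\text{ has dense periodic points}$, as claimed.

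There is essentially no obstacle here beyond bookkeeping: the only thing needing care is to confirm that the chosen example satisfies every hypothesis of the invoked Corollary — commutativity of $F$, $|F|>1$, admissibility of the chosen hyperspace, and its topology being finer than the upper Vietoris topology — each of which is immediate in the construction above.
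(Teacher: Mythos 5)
Your proposal is correct and follows essentially the same route as the paper: exhibit a commutative family of two distinct continuous circle maps whose base system has dense periodic points, and then invoke the preceding corollary (commutative, non-singleton $F$ forces failure of dense periodicity upstairs) to conclude the non-implication. The paper's own counterexample uses two distinct rational rotations of $S^1$ rather than your $\{\mathrm{id},\, z\mapsto z^2\}$, and additionally verifies directly that $X$ is the only periodic point of the induced map, but the logical structure is the same.
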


\begin{Remark}
The above corollary shows that there can exist dynamical systems
$(X,F)$ with the dense set of periodic points such that the induced
system does not have dense set of periodic points. Such a scenario
happens when the dynamics on $X$ is induced by more than one map.
Such a behavior for the induced system is due to the fact that
although the dynamics on the space $X$ is generated by a finite
commutative relation $F$, the dynamics on the hyperspace is
generated by a function and hence conventional methodology for
investigating the dynamics on the hyperspace are to be used. In
particular, $x$ is periodic for $F$ if there exists $n\in
\mathbb{N}$ such that $x\in F^n(x)$ but $A$ is periodic for
$\overline{F}$ if there exists $n\in \mathbb{N}$ such that $A=
\overline{F}^n(A)~~ (= F^n(A))$. Consequently, there is a
significant difference in the basic dynamical notions of the system
which induces such a contrasting  behavior on the hyperspace. We now
give an example in support of our argument.
\end{Remark}

\begin{ex}
Let $X= S^1$ be the unit circle and let $f_1$ and $f_2$ be the
rational rotations on unit circle defined as $f_1(\theta)= \theta+
p$ and $f_2(\theta)= \theta+q$ respectively.

Let $F=\{f_1,f_2\}$ be the commutative relation  and let $(X,F)$ be
the corresponding dynamical system generated. As $f_i$ is rotation
on the unit circle by a rational multiple of $2\pi$, $f_i$ exhibits
dense set of periodic points. Further as $f_i^n(x)\in F^n(x)$ for
all $i$ and $n$, $F$ exhibits dense set of periodic points. However,
as each $f_i$ is a rotation on the unit circle (by different
angles), for any set $A\neq X$ $F^n(A)=A$ never holds good and hence
the hyperspace has a unique periodic point $X$. Consequently, the
example shows that there exists dynamical systems $(X,F)$ with the
dense set of periodic points such that the induced system does not
have dense set of periodic points.
\end{ex}

\begin{Remark}
The above example proves that the induced dynamics cannot exhibit
dense set of periodic points when induced by a commutative family of
more than one map. On similar lines, considering $f_i$ ($i=1,2$) to
be irrational rotations on the unit circle gives an example of a
relation where each component is transitive but the relation is not
super-transitive. The relation is not super-transitive as $f_1$ and
$f_2$ are isometries and hence for each natural number $2n$ (or
$2n+1$) $f_1^n\circ f_2^n$ and $f_1\circ f_2^{2n-1}$  (or
$f_1^n\circ f_2^n$ and  $f_1\circ f_2^{2n}$) cannot be pushed inside
an open set of arbitrary arc length. Consequently, the dynamics
induced on the hyperspace by the family considered is not
super-transitive.
\end{Remark}

\begin{Remark}
The above results establish that when the dynamics on $X$ is
generated by a commutative family of more than one function, the
dynamics on the hyperspace cannot be super-transitive and cannot
have dense set of periodic points. The commutativity of the family
$F$ plays an important role in proving the result and hence cannot
be dropped. In absence of commutativity, the order of $f$ and $g$ in
any member of $F^n(x)$ cannot be altered and the proof of the result
does not hold good. However under non-commutativity, $g\circ f^n(x)$
and $f\circ g\circ f^{n-1}(x)$ (and others where $f_i$'s are applied
same number of times but in different order) do not coincide and
hence $F^n(x)$ contains more elements. Consequently, it is expected
that as super-transitivity (or dense periodicity) does not hold
under commutativity, it will not hold in the non-commutative case
(as $F^n(x)$ contains more elements). However, any proof for the
belief is not available and hence is left open.
\end{Remark}

We now give an example to show, that the induced dynamics, when
induced by more than one function, might be sensitive.

\begin{ex}
Let $\Sigma_2$ be the sequence space of all bi-finite sequences of
two symbols $0$ and $1$. For any two sequences $x= (x_i)$ and $y=
(y_i)$, define

\centerline{$d(x,y) = \sum \limits_{i= -\infty}^{\infty} \frac{|x_i
- y_i |}{2^{|i|}}$}

It is easily seen that the metric $d$ generates the product topology
on $\sum_2$. \\

%Let $S \subset \sum_2$ be the set of all bi-finite sequences for
%which the symbol $1$ occurs atmost once.

\centerline{Let $ \sigma : \Sigma_2 \rightarrow \Sigma_2 $ be
defined as}
\centerline{$ \sigma(\ldots x_{-2} x_{-1}.x_0 x_1
\ldots) = \ldots x_{-2} x_{-1} x_{0}. x_1 x_2 x_3 \ldots $}

\vskip 0.3cm

The map $\sigma$ is known as the shift map and is continuous with
respect to the metric $d$ defined.

Let $\mathbb{F}=\{\sigma,\sigma^2\}$ and let $\overline{F}$ be the
corresponding induced map on the hyperspace. We claim that the
induced map is sensitive on $\mathcal{K}(X)$. Equivalently, it is
sufficient to show that the map is sensitive on a dense subset of
$\mathcal{K}(X)$.

Let $A=\{x_1,x_2,\ldots,x_k\}$ be a finite set where each $x_i=
(\ldots x_i^{-2} x_i^{-1} .x_i^0 x_i^1\ldots x_i^n \ldots)$ is an
element in $\Sigma$ and let $\epsilon>0$ be given. Let $r\in
\mathbb{N}$ such that $\frac{1}{2^r}<\epsilon$.

Let $y_i= (\ldots x_i^{-2} x_i^{-1}.x_i^0 \ldots x_i^{r+1}
000\ldots)$ and $z_i= (\ldots x_i^{-2} x_i^{-1} x_i^0 \ldots
x_i^{r+1} 111\ldots)$. Then $B=\{y_1,y_2,\ldots,y_k\}$ and
$C=\{z_1,z_2,\ldots,z_k\}$ are elements in $S(A,\epsilon)$ such that
$F^{r+2}(y_i)=(\ldots x_i^{-2}x_i^{-1}\ldots x_i^r.0000\ldots)$ and
$F^{r+2}(z_i)=(\ldots x_i^{-2}x_i^{-1}\ldots x_i^r.11111\ldots)$.

Consequently, $d_H(\overline{F}^{r+2}(B), \overline{F}^{r+2}(C))\geq
1$ and hence $\overline{F}$ is sensitive at $A$. As the proof holds
for any finite subset $S$ of $Sigma_2$, $\overline{F}$ is sensitive
at finite subsets of $\Sigma$. Consequently, $\overline{F}$ is
sensitive on $\Sigma$.
\end{ex}

\section{Conclusion}

The paper discusses the dynamics of the induced function on the
hyperspace, when the function is induced by a non-trivial family of
commuting continuous self maps on $X$. It is observed that the
dynamics is contrary to the case when the map on the hyperspace is
induced using a single function. While the map induced by single
function can exhibit complex dynamical behavior (for example weakly
mixing or topological mixing), the dynamics induced by a collection
of two or more commuting maps cannot even be transitive and hence
cannot exhibit any of the higher notions of mixing. Further, it is
established that the dynamics induced by such a family cannot have
dense set of periodic points. This once again is a contrary to the
case when the map is induced by a single function, as the map
induced in that case always has dense set of periodic points, if the
original system has dense set of periodic points. We also give an
example to show that the dynamics induced by a commutative family
may be sensitive. It is worth mentioning that although the problem
has been solved when the dynamics on the hyperspace is induced by a
commutative family, the non-commutative case is still open for
investigation. As $F^n(x)$ contains more points in the
non-commutative case,  similar results are expected to hold. However
as the proof for non-commutative case could not be derived, we leave
it open for further investigation.

%\begin{Lemma}
%If $f$ is weakly mixing and $f$ has two periodic points with
%distinct orbits, then $\overline{F}$ is sensitive on
%$\mathcal{K}(X)$ where $F=\{f,f^2\}$.
%\end{Lemma}
%
%\begin{proof}
%Let $p$ be the periodic point of period $k$ and let $V_{p,n}=
%S(p,\frac{1}{n})$. Let $a,b \in X$ such that $d(a,b)= 2r (2r = daim
%(X))$. It may be noted that $d(a,p)<r$ and $d(b,p)<r$ implies
%$d(a,b)\leq d(a,p)+d(p,b)<2r$ and thus either $d(a,p)\geq r$ or
%$d(b,p)\geq r$. Without loss of generality, let $d(a,p)\geq r$. We
%will equivalently show that $\overline{F}$ is sensitive on
%collection of finite subsets of $X$.
%
%
%Let $\{x_1,x_2,\ldots, x_r\}$ be a finite set and $U_{x,n}$ denote
%the open ball centered at $x$ with radius $\frac{1}{n}$. and let
%$W_{a,n}= S(a,\frac{1}{n})$. As $f$ is weakly mixing, there exists
%$k\in \mathbb{N}$ and $u_1,u_2 \ in U$ such that $f^k(u_1)\in
%V_{p,n}$ and $f^k(u_2)\in W_{a,n}$. Consequently,
%$d(f^k(u_1),f^k(u_2))\geq \frac{r}{2}$.
%
%\end{proof}

%It is known that the induced map on hyperspace is transitive if and
%only if it is weakly mixing \cite{ba}. Thus the map by the relation
%$F$ is transitive if and only if $F$ is super-weakly mixing.
%
%The following result proves that if the map on the hyperspace is
%induced by a collection of commuting maps, then the induced map
%cannot be transitive.

\bibliography{xbib}

\end{document}